\numberwithin{equation}{section}
\def\F{{\mathcal F}}
\def\bd{{\boldsymbol{\cdot}}}
\def\N{\mathbb N}
\newtheorem{theorem}{Theorem}[section]
\newtheorem{proposition}[theorem]{Proposition}
\newtheorem{lemma}[theorem]{Lemma}
\newtheorem{corollary}[theorem]{Corollary}
\newtheorem{conjecture}[theorem]{Conjecture}
\newtheorem{remark}[theorem]{Remark}
\theoremstyle{definition}
\begin{document}

%
%

\title[Weighted consecutive Davenport constant]{A note on weighted consecutive Davenport constant}

\author{A.  Lemos$^{1}$, A.O. Moura$^{1}$, S. Ribas$^{2}$  and A.T. Silva$^{1}$}
\thanks{All the authors were partially supported by FAPEMIG grants APQ-02546-21 and RED-00133-21. Ribas was also supported by FAPEMIG grant APQ-01712-23}
\address{$^1$Departamento de Matem\'{a}tica, Universidade Federal de Vi\c cosa, Vi\c cosa-MG, Brazil}
\email{\url{abiliolemos@ufv.br} \\ \url{allan.moura@ufv.br} \\ \url{anderson.tiago@ufv.br}}
\address{$^2$Departamento de Matem\'{a}tica, Universidade Federal de Ouro Preto, Ouro Preto-MG, Brazil}
\email{\url{savio.ribas@ufop.edu.br}}

\keywords{Finite abelian groups, metacyclic groups, sequences and sets, Davenport constant}

\subjclass[2010]{20D99,11B75}

\begin{abstract}
Let $G$ be a 
group  and $A\subseteq [1,\exp(G)-1]$. We define the constant ${\sf C}_A(G),$ which is the least positive integer $\ell$ such that every sequence over $G$ of length at least $\ell$ has an $A$-weighted consecutive product-one subsequence. In this paper, among other things, we prove that ${\sf C}_A(C_n^2)=4$ with $A=[1,n-1],$ and ${\sf C}(H\times K)=|H||K|$, where $H$ is a finite abelian group and $K$ is a metacyclic group.
\end{abstract}

\maketitle

\section{Introduction}

Let $G$ be a finite abelian group with exponent $n$ and $S$ be a sequence over $G$. The enumeration of subsequences with certain prescribed properties is a classical topic in Combinatorial Number Theory, going back to Erd\H{o}s, Ginzburg and Ziv (see \cite{EGZ,Ger1,Ger2}) who proved that $2n-1$ is the smallest positive integer such that every sequence $S$ over a cyclic group $C_{n}$ of order $n$, with length $2n-1,$ has a zero-sum subsequence of length $n$. This raises the problem of determining the smallest positive integer $\ell$ such that every sequence $S=g_{1} \bd \dots \bd g_{\ell}$ has a nonempty zero-sum subsequence. Such an integer $\ell$ is called the {\it Davenport constant of $G$} (see \cite{Dav, OlsonI}), denoted by ${\sf D}(G)$, which is still unknown for a wide class of groups. Similarly, for a nonempty subset $A\subset \mathbb{Z}$, Adhikari \textit{et al.} (see \cite{Adh1}) defined the {\it $A$-weighted Davenport constant}, denoted by ${\sf D}_A(G)$, to be a smallest $t\in \mathbb{N}$ such that every sequence $S$ over $G$ of length $t$ has nonempty $A$-weighted zero-sum subsequence. For further results on weighted Davenport constant, see \cite{BR,AGS,losm,losm2,T}. Often these problems are studied for commutative groups using additive notation, because this one calls such a subsequence a ``zero-sum'' subsequence. The current paper focuses on the direct product of commutative and non-commutative groups noted multiplicatively, thus we use the term ``product-one'' sequence instead. In 2023, Mondal, Paul and Paul (see \cite{mon}) defined the constant ${\sf C}_A(G),$ which is the smallest natural $\ell$ such that every sequence over $G,$ with length at least $\ell$ has an $A$-weighted consecutive product-one subsequence. They achieved several results for the group $C_n$ with varying weights. Here, we extend several of their results to more general groups.

\section{Notation and Preliminary Results}

Let $\mathbb{N}_{0}$ be {\it the set of non-negative integers}. For integers $a,b\in\mathbb{N}_{0}$, we define $[a,b]=\left\{ x\in\mathbb{N}_{0}:a\leq x\leq b\right\} $. Let $G$ be a multiplicative group. Let $\F^*(G)$ denote the free non-abelian monoid with basis $G$, whose elements are also called the {\it ordered sequences over $G$}. The set $\F^*(G)$ can be seem as the semigroup of words over the alphabet $G$, and its elements are also called {\it words} or {\it strings}. If $S = g_1 \bd {\dots} \bd g_k\in \F^*(G)$ (i.e. $S$ is an ordered sequence) and $\ell \in \N$, then $\pi^*(S) = g_1 {\dots} g_k \in G$ is the {\it ordered product of $S$}, and $\left|S\right|=k$ is the {\it length of $S$}. Furthermore, we use the notation $S^{[\ell]} = \underbrace{(g_1 \bd {\dots} \bd g_k) \bd {\dots} \bd (g_1 \bd {\dots} \bd g_k)}_{\ell \text{ copies of $S$ in this order}}$.  A {\it subsequence} $T=g_{i_{1}\bd {\dots} \bd}g_{i_{m}}$ of $S$, with $\{i_{1},\ldots,i_{m}\}\subseteq[1,k]$ is denoted by $T\mid S$. An {\it ordered subsequence} $T$ of $S$ is an ordered sequence of the form $T = g_i \bd g_{i+1} \bd {\dots} \bd g_{j-1} \bd g_j$ for some $1 \le i \le j \le k$. We also write $T \mid S$ to indicate that $T$ is an ordered subsequence of $S$.

Let $\mathcal{F}\left(A\right)$ be the free abelian monoid with basis $A$. In other words, $\mathcal{F}\left(A\right)$ is the set of (unordered) {\it sequences over $A$}. Fixed $\mathbf{a} = a_{1}\bd {\dots} \bd a_{k}\in\mathcal{F}(A)$, the product of the form $\pi^{\mathbf{a}}\left(S\right) = g_1^{a_{1}} \bd {\dots} \bd g_k^{a_{k}}$ is an {\it $A$-weighted consecutive product}. If $n$ is the exponent of $G$ and $A=[1,n-1]$, we say that $A$ is the {\it full weight}.

We define $\Pi_{A}\left(S\right) = \left\{ g_i^{a_{i}}g_{i+1}^{a_{i+1}} \bd {\dots} \bd g_{j-1}^{a_{j-1}} g_j^{a_{j}}: 1 \le i \le j \le k\mbox{ and }a_{t}\in A \mbox{ for all } i \le t \le j \right\} $ to be the {\it set of nonempty $A$-weighted consecutive subproducts of $S$}.
According to the previous definitions, we adopt the convention that $\pi^{\mathbf{a}}\left(\lambda\right)=1$,
for any $\mathbf{a}\in\mathcal{F}(A),$ where $\lambda$ denote the empty sequence. For convenience, we define
$\Pi_{A}^{\bullet}\left(S\right)=\Pi_{A}\left(S\right)\cup\left\{ 1\right\}$.

The sequence $S$ is called 
\begin{enumerate}[(i)]
\item an {\it $A$-weighted consecutive product-one sequence} if $\pi^{\mathbf{a}}\left(S\right)=1$ for some $\mathbf{a}\in\mathcal{F}(A)$, and
\item an {\it $A$-weighted consecutive product-one free sequence} if $1\notin\Pi_{A}\left(S\right).$
\end{enumerate}
When $A=\{1\},$ we call $S$ {\it a consecutive product-one sequence} and {\it a consecutive product-one free sequence}, respectively.

Let ${\sf C}_A(G)$ be the smallest positive integer $\ell$ such that every sequence over $G,$ with length at least $\ell,$ has an $A$-weighted consecutive product-one subsequence. This constant is called {\it consecutive $A$-weighted Davenport constant of $G$}. For $A = \{1\}$, we omit the index $A$. The following lemmas were previously established.

\begin{lemma} \cite[Theorem 1]{mon}
\label{ub}
Let $(G, \cdot)$ be a finite group with $|G| =m,$ $k \ge m,$ and identity $1.$ Then given any sequence $S = g_1 \bd {\dots} \bd g_k \in \F^*(G)$ of length $k,$ there exist $i, j \in [1, k]$ such that $i\le j$ and $g_ig_{i+1}\cdots g_j=1.$ 
\end{lemma}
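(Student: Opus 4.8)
The plan is to reduce the statement to a pigeonhole argument on prefix products. First I would introduce the prefix products $P_0 = 1$ and $P_t = g_1 g_2 \cdots g_t$ for $t \in [1,k]$, which gives a family of $k+1$ elements $P_0, P_1, \dots, P_k$ of $G$. The key observation is the telescoping identity $g_i g_{i+1} \cdots g_j = P_{i-1}^{-1} P_j$, valid in any (possibly non-abelian) group since $P_j = P_{i-1}\,(g_i \cdots g_j)$. Consequently, the consecutive product $g_i \cdots g_j$ equals the identity precisely when $P_{i-1} = P_j$, so the task becomes finding two prefix products that coincide.

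Next I would invoke the pigeonhole principle. There are $k+1$ prefix products, all taking values in the set $G$ of size $m$, and the hypothesis $k \ge m$ forces $k+1 > m$. Hence two of these products must agree, say $P_a = P_b$ with $0 \le a < b \le k$. Setting $i = a+1$ and $j = b$ then yields $1 \le i \le j \le k$ together with $g_i g_{i+1} \cdots g_j = P_a^{-1} P_b = 1$, which is exactly the desired conclusion.

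The argument is a single application of pigeonhole, so I do not expect a substantial obstacle. The only points requiring a little care are including the empty prefix $P_0 = 1$ among the candidates (this is what raises the count to $k+1$ rather than $k$, and it also covers the boundary case in which an entire initial block $g_1 \cdots g_j$ is already product-one, corresponding to $a = 0$), and verifying that the strict inequality $k+1 > m$ holds exactly when $k \ge m$ for integers. This lemma is the consecutive, multiplicative analogue of the classical bound ${\sf D}(G) \le |G|$, and I would expect the same prefix-product device, suitably adapted, to reappear as the backbone of the weighted refinements developed later in the paper.
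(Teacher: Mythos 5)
Your proof is correct and complete: the pigeonhole argument on the $k+1$ prefix products $P_0,\dots,P_k$, combined with the identity $g_i\cdots g_j = P_{i-1}^{-1}P_j$, is exactly the standard argument, and it is the one given in the cited source (the paper itself states this lemma as a quotation of \cite[Theorem 1]{mon} without reproducing a proof). No gaps; the care you take with $P_0$ and with $k+1>m$ is precisely what the argument needs.
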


\begin{lemma}\cite[Corollary 1]{mon}
\label{cyclic}
We have ${\sf C}(C_n)=n.$
\end{lemma}

Next, we will establish our first result, which is a lower bound on ${\sf C}_A(G)$, where $G$ is a direct product of two groups. A similar result for the direct product of two modules was shown in \cite{mpp}.

\begin{proposition}\label{induction}
Let $(H, \cdot), (K, \cdot)$ be finite groups with $\exp(H) = \exp(K)$ and $A \subseteq [1,\exp(K)-1].$ Then ${\sf C}_A(H \times K)\ge {\sf C}_A(H){\sf C}_A(K).$ In particular, when $A=\{1\}$ the hypothesis $\exp(H)=\exp(K)$ can be removed.
\end{proposition}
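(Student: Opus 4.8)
The plan is to prove the lower bound by exhibiting a single long $A$-weighted consecutive product-one free sequence over $H \times K$. Write $c_H = {\sf C}_A(H)$ and $c_K = {\sf C}_A(K)$. By minimality of the constant there exist an $A$-weighted consecutive product-one free sequence $U = x_1 \bd \dots \bd x_{c_H-1} \in \F^*(H)$ of length $c_H - 1$ and an $A$-weighted consecutive product-one free sequence $V = y_1 \bd \dots \bd y_{c_K-1} \in \F^*(K)$ of length $c_K - 1$. Such extremal sequences exist because ${\sf C}_A(H) > c_H - 1$ forces some sequence of length at least $c_H - 1$ to be free, and every (consecutive) factor of a free sequence is again free; likewise for $K$.

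First I would build the candidate sequence over $H \times K$ by interleaving $c_K$ identical copies of $U$, embedded in $H \times \{1\}$, with the letters of $V$, embedded in $\{1\} \times K$ and playing the role of separators:
\[
S = U^{(0)} \bd (1,y_1) \bd U^{(1)} \bd (1,y_2) \bd \dots \bd (1,y_{c_K-1}) \bd U^{(c_K-1)},
\]
where each $U^{(m)} = (x_1,1) \bd \dots \bd (x_{c_H-1},1)$. Its length is $c_K(c_H-1) + (c_K-1) = c_H c_K - 1$, exactly one short of the target, so it suffices to prove $1 \notin \Pi_A(S)$.

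The heart of the argument is a two-case analysis driven by the separators. Fix a consecutive range in $S$ and weights $a_t \in A$; since $(x,y)^{a} = (x^a,y^a)$, the two coordinates of the resulting subproduct share the same exponents. In the $K$-coordinate every letter of a copy $U^{(m)}$ is trivial, so the $K$-component equals $\prod y_m^{a}$ taken over the separators lying inside the range. Because the separators occupy single, evenly spaced positions, the ones inside a contiguous range form a contiguous block $y_p \bd \dots \bd y_q$ of $V$, so this $K$-component is a (possibly empty) $A$-weighted consecutive subproduct of $V$. If at least one separator lies in the range, it is a nonempty such subproduct, hence $\ne 1$ since $V$ is free, and the full subproduct is $\ne (1,1)$. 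If no separator lies in the range, then — as each $U$-block is shorter than the spacing between separators — the range sits inside a single copy $U^{(m)}$, whose $H$-component is a nonempty $A$-weighted consecutive subproduct of $U$, hence $\ne 1$ because $U$ is free. In both cases the subproduct differs from the identity, so $S$ is free and ${\sf C}_A(H \times K) \ge |S| + 1 = c_H c_K$.

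I expect the only delicate point to be the bookkeeping that keeps the projections genuinely consecutive: one must verify that a contiguous range of $S$ meets the separators in a contiguous sub-block of $V$, and that a separator-free contiguous range cannot straddle two copies of $U$. Both follow from the placement of the separators, and they are exactly what reduces the coupled condition over $H \times K$ to the two uncoupled freeness hypotheses. Finally, the hypothesis $\exp(H) = \exp(K)$ is used only to ensure $A \subseteq [1,\exp(H)-1]$ as well, so that ${\sf C}_A(H)$ is defined relative to the same weight set $A$; for $A = \{1\}$ this containment is automatic for nontrivial factors, so the hypothesis may be dropped, as claimed.
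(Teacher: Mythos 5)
Your proposal is correct and takes essentially the same route as the paper: the paper builds exactly the same sequence of length ${\sf C}_A(H){\sf C}_A(K)-1$, namely ${\sf C}_A(K)$ copies of a maximal $A$-weighted consecutive product-one free sequence over $H$ (embedded in $H\times\{1\}$) separated by the successive letters of a maximal free sequence over $K$ (embedded in $\{1\}\times K$), and proves freeness by the same dichotomy on whether a contiguous range contains a separator. Your remarks on why the separators inside a range form a contiguous block of $V$, and on the role of $\exp(H)=\exp(K)$ in keeping $A$ a legitimate weight set for $H$, match the paper's (more tersely stated) justification.
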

\begin{proof}
Set $t_1={\sf C}_A(H)-1$ and $t_2= {\sf C}_A(K)-1$. By definition, there exist $A$-weighted consecutive product-one free sequences $S_1' = h_1 \bd {\dots} \bd h_{t_1} \in \F^*(H)$ over $H$ and $S_2' = g_1 \bd {\dots} \bd g_{t_2} \in \F^*(K)$ over $K$. Let $S_1 = (h_1,1) \bd {\dots} \bd (h_{t_1},1)$ and $S_2 = (1,g_1) \bd {\dots} \bd (1,g_{t_2})$. Both $S_1$ and $S_2$ belong to $\F^*(H \times K)$ and are $A$-weighted consecutive product-one free sequence, since $\exp(H)=\exp(K)$ (it is clear that if $A=\{1\}$, then this happens without the condition $\exp(H)=\exp(K)$). We construct the following sequence
\begin{align*}
	S = (h_1,1) &\bd {\dots} \bd (h_{t_1},1)\bd (1,g_1)\bd \\
	(h_1,1) &\bd {\dots} \bd (h_{t_1},1)\bd (1,g_2)\bd \\
	&\quad \!\vdots \\
	(h_1,1) &\bd {\dots} \bd (h_{t_1},1)\bd (1,g_{t_2})\bd \\
	(h_1,1) &\bd {\dots} \bd (h_{t_1},1).
\end{align*}
Notice that $|S|=t_2(t_1+1)+t_1={\sf C}_A(H){\sf C}_A(K)-1.$ As $S_1$  is an $A$-weighted consecutive product-one free sequence, we observe that if there exists $T\mid S$ such that $T$ is an $A$-weighted consecutive product-one sequence, then $(1,g_{j_1}) \bd {\dots} \bd (1,g_{j_l})\mid T$ with $1\le j_i\le t_2,$ $j_{i+1}=j_{i}+1,$ and $(1,g_{j_1}) \bd {\dots} \bd (1,g_{j_l})$ is an $A$-weighted consecutive product-one subsequence, but this is a contradiction since $S_2$ is an $A$-weighted consecutive product-one free sequence. It follows that $S$ is an $A$-weighted consecutive product-one free sequence over $H \times K$. Therefore, ${\sf C}_A(H \times K)\ge {\sf C}_A(H){\sf C}_A(K).$
\end{proof}

\begin{remark}
We note that in the proof of the Proposition \ref{induction}, $t_1={\sf C}_A(H)-1=0$ if and only if $\exp(H)\in A.$ Thus, ${\sf C}_A(H \times K)\ge {\sf C}_A(K)$ provided $\exp(H)<\exp(K).$
\end{remark}

\section{Results when $A=\{1\}$}

In this section, we present two results for the case $A = \{1\}$, that is, for the unweighted case. The first one determines the value of ${\sf C}(G)$ for every finite abelian group $G$.

\begin{theorem}\label{ab}
Let $G$ be finite abelian group. Then ${\sf C}(G) = |G|$.
\end{theorem}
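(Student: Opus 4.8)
The plan is to prove the two inequalities $\mathsf{C}(G)\le |G|$ and $\mathsf{C}(G)\ge |G|$ separately. The upper bound is immediate from the results already available: Lemma~\ref{ub} asserts that any sequence of length $k\ge |G|$ over an arbitrary finite group already contains a consecutive subproduct $g_ig_{i+1}\cdots g_j$ equal to the identity, which in the case $A=\{1\}$ is exactly a consecutive product-one subsequence. Hence every sequence of length at least $|G|$ has the required property, so $\mathsf{C}(G)\le |G|$.

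For the lower bound I would exhibit a consecutive product-one free sequence of length $|G|-1$, which forces $\mathsf{C}(G)>|G|-1$. The key observation is a telescoping identity: for a sequence $S=g_1\bd {\dots} \bd g_k$, writing $P_0=1$ and $P_t=g_1\cdots g_t$ for the partial products, one has $g_ig_{i+1}\cdots g_j=P_{i-1}^{-1}P_j$ for all $1\le i\le j\le k$. Consequently $g_i\cdots g_j=1$ if and only if $P_{i-1}=P_j$; that is, $S$ is consecutive product-one free precisely when the $k+1$ partial products $P_0,P_1,\dots,P_k$ are pairwise distinct.

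With this reformulation the construction is transparent. Enumerate the elements of $G$ as $x_0=1,x_1,\dots,x_{|G|-1}$ (any ordering starting at the identity), and set $g_i=x_{i-1}^{-1}x_i$ for $1\le i\le |G|-1$. Then the sequence $S=g_1\bd {\dots} \bd g_{|G|-1}$ has length $|G|-1$, and its partial products are exactly $P_t=x_t$, which are pairwise distinct by construction; hence $S$ is consecutive product-one free. Combining the two bounds yields $\mathsf{C}(G)=|G|$.

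I expect no serious obstacle here; the only point requiring care is the off-by-one bookkeeping in the partial-product correspondence, namely the convention $P_0=1$ and the index range $1\le i\le j\le k$. It is worth noting that both the telescoping identity and the enumeration argument use nothing about commutativity, so the same proof in fact yields $\mathsf{C}(G)=|G|$ for every finite group; the abelian hypothesis is not essential for this particular statement, though it situates the result alongside the other abelian computations in this section.
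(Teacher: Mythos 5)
Your proof is correct, but it takes a genuinely different route from the paper's. The paper proves the lower bound by induction on the rank of $G$: writing $G = H \times C_{n_r}$, it uses Lemma~\ref{cyclic} for the cyclic base case and Proposition~\ref{induction} for the inductive step ${\sf C}(G) \ge {\sf C}(H)\,{\sf C}(C_{n_r}) \ge |H|\, n_r = |G|$. You instead argue directly via the partial-product correspondence: $S = g_1 \bd {\dots} \bd g_k$ is consecutive product-one free if and only if the partial products $P_0 = 1, P_1, \dots, P_k$ are pairwise distinct, and enumerating $G = \{x_0 = 1, x_1, \dots, x_{|G|-1}\}$ and setting $g_i = x_{i-1}^{-1}x_i$ yields a free sequence of length $|G|-1$. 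Your argument is more elementary (it bypasses Proposition~\ref{induction} and Lemma~\ref{cyclic}, needing only Lemma~\ref{ub} for the upper bound) and strictly more general: as you observe, it nowhere uses commutativity, so it establishes ${\sf C}(G) = |G|$ for every finite group. This subsumes the paper's Theorem~\ref{mc} on metacyclic groups and in fact settles the Conjecture stated at the end of Section~3, which the paper leaves open. What the paper's inductive approach buys instead is reusability in the weighted setting: Proposition~\ref{induction} is applied again to obtain the lower bounds of Proposition~\ref{lb}, where no analogue of your partial-product bijection exists, since the weights destroy the telescoping identity $g_i^{a_i} \cdots g_j^{a_j} \neq P_{i-1}^{-1}P_j$ in general.
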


\proof
We write $G=C_{n_1}\times C_{n_2}\times \cdots\times C_{n_r},$ where $1<n_1 \mid n_2\mid \cdots\mid n_r=\exp(G).$ By Lemma \ref{ub}, we have ${\sf C}(G) \le|G|.$ To prove that ${\sf C}(G)\ge |G|$ we proceed by induction on $r.$ The case $r=1$ follows from Lemma \ref{cyclic}. We write $G=H\times C_{n_r}.$ By induction hypothesis, it follows that ${\sf C}(H)\ge |H|.$ According to Proposition \ref{induction}, we have ${\sf C}(G)\ge |G|.$ Therefore, we are done. 
\qed

\vspace{2mm}

The second result of this section concerns metacyclic groups. It is known (see \cite{Hem}) that if $G$ is a metacyclic group, then
\begin{equation}\label{eq:Gmetacyclic}
G = \langle x, y \mid y^n = 1, x^k = y^\ell, yx = xy^s \rangle,
\end{equation}
where $n \mid s^k - 1$ and $n \mid \ell(s-1)$. After a change of variables, one may assume without loss of generality that $\ell \mid n$. Moreover, $G = \{x^a y^b; a \in [0,k-1], b \in [0,n-1]\}$, therefore $|G| = nk$.

\begin{theorem} \label{mc}
Let $G$ be a metacyclic group. Then ${\sf C}(G) = |G|$.
\end{theorem}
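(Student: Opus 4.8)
The plan is to prove the theorem by establishing matching upper and lower bounds, mirroring the structure of the proof of Theorem \ref{ab} but replacing the abelian product decomposition with the metacyclic presentation \eqref{eq:Gmetacyclic}. The upper bound ${\sf C}(G) \le |G|$ is immediate: since $G$ is a finite group with $|G| = nk$, Lemma \ref{ub} applied with $m = |G|$ guarantees that any sequence of length at least $|G|$ contains a nonempty consecutive subsequence whose ordered product is $1$, which is exactly a consecutive product-one subsequence in the case $A = \{1\}$.

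The real content is the lower bound ${\sf C}(G) \ge |G|$, for which I would exhibit an explicit consecutive product-one free ordered sequence of length $|G| - 1 = nk - 1$. The natural candidate is built from the two generators: roughly speaking, I would interleave blocks of $y$'s with occurrences of $x$, producing something like $y^{[n-1]} \bd x \bd y^{[n-1]} \bd x \bd {\dots}$, consisting of $k$ runs of $y$ (the last run possibly truncated) separated by $k-1$ copies of $x$, arranged so that the total length is exactly $nk - 1$. The key point I would need to verify is that no consecutive subproduct $g_i g_{i+1} \cdots g_j$ equals $1$. Using the normal form $G = \{x^a y^b : a \in [0,k-1],\ b \in [0,n-1]\}$ and the commutation relation $yx = xy^s$, any consecutive product of such a block can be pushed into normal form $x^a y^b$, where $a$ counts the number of $x$'s in the window $[i,j]$. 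I would track how the exponent $a$ and the accumulated $y$-exponent $b$ evolve as the window grows, and show that $x^a y^b = 1$ forces both the $x$-part and the $y$-part to vanish simultaneously, which the arrangement of block lengths is designed to prevent.

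The main obstacle will be the bookkeeping of the $y$-exponents under the twisting relation $yx = xy^s$: moving a $y$ past an $x$ multiplies its exponent by $s$, so a window containing several $x$'s accumulates $y$-exponents weighted by powers of $s$, and the relations $x^k = y^\ell$ and $n \mid s^k - 1$ must be invoked to control when these collapse modulo $n$. Concretely, a consecutive subproduct spanning $a$ copies of $x$ and the intervening $y$-runs reduces to $x^a y^{b}$ with $b$ a specific $s$-weighted sum of the run lengths modulo $n$, and product-one requires $k \mid a$ together with a congruence on $b$. The argument must confirm that for every window of length at most $nk - 1$ at least one of these two conditions fails. I expect this to require a careful case split according to whether the window wraps around a full multiple of $k$ copies of $x$, and it is precisely here that the choice of block lengths summing to $nk - 1$ (one short of $|G|$) becomes essential, since a length-$nk$ window is exactly what Lemma \ref{ub} forces to produce the identity.
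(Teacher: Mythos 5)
Your construction and overall strategy coincide with the paper's: the paper takes $S = (y^{[n-1]} \bd x)^{[k-1]} \bd y^{[n-1]}$, i.e.\ $k$ runs of $y$, each of length exactly $n-1$ (no truncation is needed, since $k(n-1)+(k-1)=nk-1$), separated by the $k-1$ copies of $x$; the upper bound via Lemma \ref{ub} is also the same. However, your verification of consecutive product-one freeness is left as a plan, and the plan mispredicts where the difficulty lies: you anticipate tracking $s$-weighted sums of run lengths, invoking $n \mid s^k-1$ and $x^k = y^\ell$, and a case split ``according to whether the window wraps around a full multiple of $k$ copies of $x$.'' That case is vacuous and the bookkeeping unnecessary, for one simple reason you did not exploit: the entire sequence contains only $k-1$ copies of $x$, so every consecutive window contains between $0$ and $k-1$ of them.

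Combine this with your own (correct) observation that a product-one window forces $k \mid a$, where $a$ is the number of $x$'s in the window: since $\langle y \rangle$ is normal in $G$ and $G/\langle y \rangle \cong C_k$ is generated by the image of $x$ (equivalently, by uniqueness of the normal form $x^a y^b$ with $a \in [0,k-1]$, $b \in [0,n-1]$, which holds because $|G| = nk$), an element $x^a y^v$ with $a \in [1,k-1]$ can never equal $1$, regardless of $v$. So either the window contains some $x$, in which case $a \in [1,k-1]$ and its product $x^a y^v \neq 1$ no matter how the $y$-exponents accumulate; or it contains no $x$, in which case it lies inside a single run $y^{[n-1]}$ and its product is $y^v$ with $v \in [1,n-1]$, again $\neq 1$. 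This two-case remark is the paper's entire argument: the $y$-exponent never needs to be computed, and neither $n \mid s^k - 1$ nor $x^k = y^\ell$ plays any role beyond guaranteeing the normal form. As written, your proposal stops short of a proof of the lower bound, but the gap is closed by this single observation rather than by the careful case analysis you envisioned.
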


\proof
Since ${\sf C}(G) \le |G|$ for every finite group $G$, it is enough to prove that ${\sf C}(G) \ge |G|$. We write $G$ as in Eq. \eqref{eq:Gmetacyclic}. Let $S \in \F^*(G)$ given by $S = (y^{[n-1]} \bd x)^{[k-1]} \bd y^{[n-1]}$, so that $|S| = nk-1$. We claim that $S$ is a consecutive product-one free sequence. In fact, if $T \mid S$ is a consecutive product-one subsequence, then we must have $x \mid T$, otherwise $T \mid y^{[n-1]}$, a contradiction. Since $x \mid T$, it follows that $\pi^*(T) = x^u y^v$ for some $u \in [1,k-1]$, therefore $\pi^*(T) \neq 1$, another contradiction.
\qed

\begin{corollary}
Let $G=H\times K$ be a finite group, where $H$ is a finite abelian group and $K$ is a metacyclic group. Then, ${\sf C}(G)=|H||K|.$
\end{corollary}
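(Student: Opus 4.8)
The plan is to combine the two main theorems of this section with the lower bound from Proposition~\ref{induction} and the universal upper bound from Lemma~\ref{ub}. Since $G = H \times K$ is a finite group, Lemma~\ref{ub} immediately gives the upper bound ${\sf C}(G) \le |G| = |H||K|$. Thus the entire task reduces to establishing the matching lower bound ${\sf C}(H \times K) \ge |H||K|$, and here the key observation is that we are in the unweighted case $A = \{1\}$, where Proposition~\ref{induction} applies \emph{without} any hypothesis relating the exponents of $H$ and $K$. This is precisely the ``in particular'' clause of that proposition, and it is what makes the argument clean.

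First I would invoke Proposition~\ref{induction} with $A = \{1\}$ to obtain
\[
{\sf C}(H \times K) \ge {\sf C}(H)\,{\sf C}(K).
\]
Next I would evaluate the two factors using the results already proved in this section: Theorem~\ref{ab} gives ${\sf C}(H) = |H|$ since $H$ is a finite abelian group, and Theorem~\ref{mc} gives ${\sf C}(K) = |K|$ since $K$ is metacyclic. Substituting these yields ${\sf C}(H \times K) \ge |H|\,|K|$. Combining with the upper bound from Lemma~\ref{ub} forces equality, so ${\sf C}(G) = |H||K|$, completing the proof.

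Honestly, there is no real obstacle here: this corollary is a direct splicing together of Theorem~\ref{ab}, Theorem~\ref{mc}, and the unweighted form of Proposition~\ref{induction}. The only point that deserves a moment's care is checking that the exponent hypothesis $\exp(H) = \exp(K)$ of Proposition~\ref{induction}, which would otherwise be needed, is genuinely dispensable in the case $A = \{1\}$ and hence imposes no constraint on the arbitrary pair $(H, K)$. Once that is confirmed, the statement follows immediately, and no separate construction of a product-one free sequence over $H \times K$ is required, since all the constructive work was already carried out in the proofs of the two theorems and the proposition.
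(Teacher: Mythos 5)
Your proof is correct and follows exactly the same route as the paper, which states that the corollary is an immediate consequence of Lemma~\ref{ub}, Proposition~\ref{induction}, and Theorems~\ref{ab} and~\ref{mc}. You have simply spelled out the splicing (upper bound from Lemma~\ref{ub}, lower bound from the unweighted case of Proposition~\ref{induction} combined with the two theorems), including the correct observation that the exponent hypothesis is not needed when $A=\{1\}$.
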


\proof
This is an immediate consequence of Lemma \ref{ub}, Proposition \ref{induction}, and Theorems \ref{ab} and \ref{mc}.
\qed

\vspace{2mm}

The results presented in this section suggest the following problem.

\begin{conjecture}
For every finite group $G$, it holds ${\sf C}(G) = |G|$.
\end{conjecture}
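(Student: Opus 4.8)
The plan is to prove the conjecture directly, by the same partial-product idea that already underlies the upper bound, but reformulating the notion of ``consecutive product-one free'' in terms of distinctness of partial products. Fix a finite group $G$ with $|G| = m$, and for a sequence $S = g_1 \bd {\dots} \bd g_k \in \F^*(G)$ introduce the partial products $p_0 = 1$ and $p_t = g_1 \cdots g_t$ for $1 \le t \le k$. The crucial observation is the telescoping identity $g_i g_{i+1} \cdots g_j = p_{i-1}^{-1} p_j$, valid for every $1 \le i \le j \le k$ in an arbitrary (possibly non-commutative) group, since $\pi^*$ multiplies the entries strictly left to right. Consequently a consecutive subproduct equals $1$ if and only if $p_{i-1} = p_j$ for some $i-1 < j$; that is, $S$ is consecutive product-one free precisely when the $k+1$ partial products $p_0, p_1, {\dots}, p_k$ are pairwise distinct.

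With this reformulation both inequalities follow at once. The bound ${\sf C}(G) \le |G|$ is exactly Lemma \ref{ub}: if $k \ge m$, then $p_0, {\dots}, p_k$ are $k+1 > m = |G|$ elements of $G$, so two coincide and a consecutive product-one subsequence appears. For the reverse inequality I would exhibit a consecutive product-one free sequence of length $m-1$. Choose any enumeration $p_0 = 1, p_1, {\dots}, p_{m-1}$ of the elements of $G$ beginning at the identity, and define $g_t = p_{t-1}^{-1} p_t$ for $1 \le t \le m-1$. The sequence $S = g_1 \bd {\dots} \bd g_{m-1}$ then has partial products exactly $p_0, {\dots}, p_{m-1}$, which are distinct by construction (in particular each $g_t \ne 1$), so by the reformulation $S$ is consecutive product-one free and ${\sf C}(G) \ge m = |G|$. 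Combining the two bounds gives ${\sf C}(G) = |G|$.

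Since the telescoping identity uses no commutativity, this argument is uniform over all finite groups and simultaneously recovers Theorems \ref{ab} and \ref{mc} together with their corollary, bypassing both the case-by-case explicit constructions and the product lower bound of Proposition \ref{induction}. The one place where care is genuinely warranted — and the natural spot for a hidden subtlety — is the passage from ``no consecutive product equals $1$'' to ``all $k+1$ partial products are distinct''; one must check that the maximal length of a product-one free sequence is controlled solely by the count $k+1$ of partial products and by $|G|$, with no finer group-theoretic invariant intervening. The telescoping identity is exactly what secures this equivalence, so I expect no real obstruction and regard the substantive content of the proof to be the reformulation itself rather than any subsequent computation.
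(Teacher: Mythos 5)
Your proof is correct, and it does more than the paper does: this statement is left as an open conjecture there, supported only by the special cases of Theorem \ref{ab} (abelian groups, proved by induction via Proposition \ref{induction}) and Theorem \ref{mc} (metacyclic groups, proved by an explicit free sequence). Your reformulation is airtight: since $g_i g_{i+1} \cdots g_j = p_{i-1}^{-1} p_j$ holds in any group, a sequence of length $k$ is consecutive product-one free if and only if its $k+1$ partial products $p_0 = 1, p_1, \ldots, p_k$ are pairwise distinct. Pigeonhole then gives ${\sf C}(G) \le |G|$ (this is exactly the content of Lemma \ref{ub}), and taking consecutive quotients $g_t = p_{t-1}^{-1} p_t$ along an arbitrary enumeration of $G$ starting at the identity produces a consecutive product-one free sequence of length $|G|-1$, giving ${\sf C}(G) \ge |G|$. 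The ``hidden subtlety'' you worry about at the end is not there: both directions of the equivalence are immediate two-line verifications from the telescoping identity, requiring no commutativity and no finer invariant, so the maximal length of a free sequence is exactly $|G|-1$ for every finite group. By contrast, the paper's lower-bound machinery (Proposition \ref{induction}, which builds a free sequence over $H \times K$ by interleaving free sequences over the factors) only applies to groups with a suitable direct-product or metacyclic structure, which is precisely why the general case was left as a conjecture; your argument is uniform over all finite groups, subsumes Theorems \ref{ab} and \ref{mc} and their corollary (indeed, the paper's explicit constructions are just special choices of the enumeration $p_0, \ldots, p_{|G|-1}$), and settles the conjecture affirmatively.
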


It is worth mentioning that for infinite groups, either there exists an infinite-order element or an infinite set of independent elements. Anyway, it is easy verifying that ${\sf C}(G) = \infty$ for every infinite group $G$.

\section{Results when $A\subseteq[1,\exp(G)-1]$} 

Let $U(n)$ denote the multiplicative group of units in the ring $\mathbb{Z}_n.$ For $\nu\ge1,$ let $U(n)^{\nu} = \{x^{\nu}; x \in U(n)\}.$ In this section, we extend some results of \cite{mon} to larger classes of finite abelian groups, considering certain weights. First, we will need the following. 

\begin{lemma} \cite[Theorems 2, 4 and 7]{mon}
\label{completo}
\begin{enumerate}[(a)]
\item If $A=[1,n-1],$ then ${\sf C}_A(C_n)=2.$
\item Let $p$ an odd prime number and $A=U(p)^2.$ Then ${\sf C}_A(C_p)=3.$
\item Let $p$ an odd prime number and $A=U(p)\backslash U(p)^2.$ Then ${\sf C}_A(C_p)=3.$
\item Let $p$ a prime number such that $p\equiv1 \pmod3$, $p\neq7,$ and $A=U(p)^3.$ Then ${\sf C}_A(C_p)=3.$
\end{enumerate}
\end{lemma}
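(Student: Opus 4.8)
The four assertions all concern cyclic groups, so my plan is to fix a generator $g$ of $C_n$ (respectively $C_p$), write each term of a sequence as $g^{c_t}$, and translate the condition that a consecutive subproduct $g^{c_i a_i}\cdots g^{c_j a_j}$ equals $1$ into the single linear congruence $\sum_{t=i}^{j} c_t a_t \equiv 0 \pmod n$ with each weight $a_t \in A$. In every part the weight set carries multiplicative structure inside $U(n)$ — all of $[1,n-1]$ in (a), the squares $U(p)^2$ in (b), the nonsquares $U(p)\setminus U(p)^2$ in (c), and the cubes $U(p)^3$ in (d) — and I would exploit this throughout. Note first that a single term $g^{c_t}$ gives a product-one subsequence precisely when $c_t a_t \equiv 0$ for some $a_t \in A$; since the weights in (b)--(d) are units, this forces $c_t \equiv 0$, i.e.\ $g^{c_t}=1$, so in the prime cases I may assume every term is nontrivial.

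Part (a) I would settle by hand. A single generator $g$ satisfies $g^a \ne 1$ for every $a \in [1,n-1]$ because $\ord(g)=n$, giving a length-one $A$-weighted consecutive product-one free sequence and hence ${\sf C}_A(C_n) \ge 2$. For the upper bound, take a length-two sequence $g^{c_1}\bd g^{c_2}$: if some $c_t$ fails to be coprime to $n$, then the single term $g^{c_t}$ admits an exponent in $[1,n-1]$ killing it; otherwise both $c_t$ are units, and fixing $a_1=1$ the congruence $c_1 + c_2 a_2 \equiv 0 \pmod n$ has the solution $a_2 \equiv -c_1 c_2^{-1}$, whose representative lies in $[1,n-1]$ since $c_1\not\equiv 0$. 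Thus ${\sf C}_A(C_n)=2$.

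For the lower bounds in (b)--(d) I would exhibit a product-one free sequence of length two. The pair $g^{c_1}\bd g^{c_2}$ (with $c_1,c_2 \ne 0$) is $A$-weighted consecutive product-one free exactly when $c_1 a_1 + c_2 a_2 \not\equiv 0$ for all $a_1,a_2 \in A$, that is, when $-c_1 c_2^{-1} \notin A A^{-1}$. Here $A A^{-1}$ is the subgroup $U(p)^2$ in (b) and (c) and $U(p)^3$ in (d), of index $2$ and $3$ respectively; since this subgroup is proper I can always choose $c_1,c_2$ with $-c_1 c_2^{-1}$ outside it, yielding ${\sf C}_A(C_p)\ge 3$.

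The substance of the argument, and the step I expect to be the main obstacle, is the upper bound ${\sf C}_A(C_p)\le 3$: every length-three sequence $g^{c_1}\bd g^{c_2}\bd g^{c_3}$ with all $c_t \ne 0$ must contain an $A$-weighted consecutive product-one subsequence. Single terms never help, so I would inspect the two adjacent pairs and the full triple. The pair $(c_t,c_{t+1})$ works iff $-c_t c_{t+1}^{-1} \in A A^{-1}$; if both pairs fail, both ratios lie outside $A A^{-1}$, and I would turn to the triple congruence $c_1 a_1 + c_2 a_2 + c_3 a_3 \equiv 0 \pmod p$. For the quadratic cases (b) and (c) this is accessible through the Cauchy--Davenport inequality applied to the three dilated sets $c_t A$, each of size $(p-1)/2$: their sumset has at least $\min\{p,\,3(p-1)/2-2\}$ elements, which equals $p$ (hence contains $0$) as soon as $p \ge 7$, leaving only $p=3,5$ to be checked directly. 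The cubic case (d) is genuinely harder: here $|A|=(p-1)/3$ and the Cauchy--Davenport bound $3|A|-2=p-3$ no longer fills $C_p$, so forcing $0$ into the sumset requires counting solutions of the triple congruence via cubic-character (Jacobi/Gauss) sums and the Weil bound, with main term of order $|A|^3/p$ and error $O(p^{3/2})$. It is precisely the explicit threshold of this estimate that produces the excluded prime $p=7$, so I would isolate a generic range in which the character-sum count is provably positive and verify the finitely many small primes by hand, confirming in particular that $p=7$ genuinely fails for cubes while every other listed prime succeeds.
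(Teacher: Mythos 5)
First, a structural remark: the paper does not prove Lemma~\ref{completo} at all --- it is quoted from \cite{mon}, so there is no internal proof to compare you against, and your attempt has to stand on its own. Parts (a), (b), (c) of your proposal do stand: the translation into the congruence $\sum_t c_t a_t \equiv 0$, the observation that unit weights make nonzero singletons useless, the lower bounds obtained by forcing $-c_1c_2^{-1}$ outside the proper subgroup $AA^{-1}$ (which is indeed $U(p)^2$ in both (b) and (c), and $U(p)^3$ in (d)), and the Cauchy--Davenport estimate $|c_1A+c_2A+c_3A|\ge\min\{p,\,3(p-1)/2-2\}=p$ for $p\ge 7$ are all correct; the leftover primes $p=3,5$ do check out by hand (for $p=5$, failure of both adjacent pairs forces $c_3=\pm c_1$ and $c_2=\pm 2c_1$, after which a sign choice kills the triple).

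Part (d) is where the actual content of the cited theorem lies, and there your proposal is a plan rather than a proof; three things are missing, and none is cosmetic. First, you never establish the estimate you invoke: with main term $|A|^3/p\approx p^2/27$, everything depends on the constant hidden in your $O(p^{3/2})$. A Parseval-plus-Weil computation (using $|\sum_u e_p(au^3)|\le 2\sqrt p$) gives error at most about $\tfrac29 p^{3/2}$, hence a threshold near $p>36$, but no such threshold appears in your write-up. Second, the finite verification below the threshold is never carried out, and it is genuinely delicate: for $p=13$ one has $A=\{\pm1,\pm5\}$ and $0\notin A+A+A$, so the triple alone can fail and one must fall back on the adjacent pairs, using that $-1$ is a cube mod $13$; your ``verify by hand'' hides exactly this kind of case analysis. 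Third, you assert that $p=7$ ``genuinely fails'' without a witness; one is $S=g^2\bd g\bd g^2$, which is $A$-weighted consecutive product-one free because the nonzero cubes mod $7$ are $\{\pm1\}$ and none of $\pm2\pm1$, $\pm1\pm2$, $\pm2\pm1\pm2$ vanishes mod $7$, so ${\sf C}_{U(7)^3}(C_7)\ge 4$. Finally, a cleaner route for the generic range than open-ended character sums: $c_1x^3+c_2y^3+c_3z^3=0$ is a smooth plane cubic, so by Hasse--Weil it has at least $p+1-2\sqrt p$ projective points, of which at most $9$ have a zero coordinate; for $p\ge 17$ this leaves a point with $xyz\neq0$, i.e.\ $0\in c_1A+c_2A+c_3A$, and then $p=13$ is the only prime left to check. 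Until the estimate is made explicit and the finitely many small primes are actually handled, part (d) remains unproven in your proposal.
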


Next we consider the same weights as in the previous lemma and we present lower bounds for larger rank groups. The first one will be useful to find the value of ${\sf C}_A(C_n^2)$, where $A$ is the full weight.

\begin{proposition}\label{lb}\hspace{2cm}
\begin{enumerate}[(a)]
\item If $A=[1,n-1],$ then ${\sf C}_A(C_n^r)\ge2^r.$
\item Let $p$ an odd prime number and $A=U(p)^2.$ Then ${\sf C}_A(C_p^r)\ge3^r.$
\item Let $p$ an odd prime number and $A=U(p)\backslash U(p)^2.$ Then ${\sf C}_A(C_p^r)\ge3^r.$
\item Let $p$ a prime number such that $p\equiv1 \pmod3$, $p\neq7,$ and $A=U(p)^3.$ Then ${\sf C}_A(C_p^r)\ge3^r.$
\end{enumerate}\end{proposition}
\proof
In all items we proceed by induction on $r$, and all the basis are given by the previous lemma. For $A=[1,n-1]$, we write $C_n^r = H \times C_n.$ By induction hypothesis we have ${\sf C}_A(H) \ge 2^{r-1}.$ Therefore, according to Proposition \ref{induction} we have ${\sf C}_A(C_n^r)\ge 2^r,$ which proves {\it (a)}. Similarly, for $A=U(p)^2$, we write $C_p^r = H\times C_p.$ By induction hypothesis we have ${\sf C}_A(H)\ge 3^{r-1}.$ Therefore, according to Proposition \ref{induction} we have ${\sf C}_A(C_p^r)\ge 3^r$, which proves {\it (b)}. Items {\it (c)} and {\it (d)} are completely similar.
\qed

\vspace{2mm}

The rest of this section is dedicated to find the value of ${\sf C}_A(C_n^2)$, where either $A$ is the full weight or $A=\{1,2,\ldots,d^kn-1\}\setminus\{d^{k-i}n:i\in[1,k]\}$. First we need the following result.

\begin{lemma}\cite[Theorem 5.2]{marc}
 \label{davenport}
 Let $G=H\times C_{n}^{r}$, where $H=C_{n_{1}}\times\cdots\times C_{n_{t}}$ with
$1<n_{1} \mid n_{2} \mid \cdots \mid n_{t} \mid n = \exp(G),$ $n_{t}<n$ and $A=\{1, \ldots, n-1\}.$ Then, ${\sf D}_{A}(G)=r+1$. 
\end{lemma}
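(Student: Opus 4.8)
The plan is to prove matching lower and upper bounds, exploiting the fact that with the full weight $A=[1,n-1]$ the admissible weights are exactly the nonzero residues modulo $n$. Since $\exp(G)=n$, I would regard $G$ as a $\Z_n$-module, and the first step is the reduction: a sequence $g_1\bd\dots\bd g_m$ over $G$ has a nonempty $A$-weighted zero-sum subsequence if and only if there exist $c_1,\dots,c_m\in\Z_n$, not all zero, with $\sum_i c_ig_i=0$. Indeed, the support of such a tuple $(c_i)$ is nonempty and each nonzero $c_i$ lies in $[1,n-1]=A$, and conversely a weighted zero-sum, padded with zeros outside its support, yields such a tuple. This reduces the whole statement to deciding when $r+1$ elements of $G$ admit a nontrivial $\Z_n$-linear relation.

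For the lower bound ${\sf D}_A(G)\ge r+1$, I would exhibit the extremal sequence directly. Let $e_1,\dots,e_r$ denote the standard generators of the factor $C_n^r$ and set $S=(0,e_1)\bd\dots\bd(0,e_r)$, of length $r$. Any relation $\sum_{i\in I}a_i(0,e_i)=0$ forces $\sum_{i\in I}a_ie_i=0$ in $C_n^r$; since the $e_i$ form a basis this gives $a_i\equiv 0\pmod n$ for every $i\in I$, which is impossible for $a_i\in[1,n-1]$. Hence $S$ is $A$-weighted zero-sum free, and ${\sf D}_A(G)\ge r+1$.

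For the upper bound ${\sf D}_A(G)\le r+1$, I would take arbitrary $g_1,\dots,g_{r+1}\in G$ and consider the $\Z_n$-homomorphism $\Phi\colon\Z_n^{r+1}\to G$, $\Phi(c)=\sum_i c_ig_i$. By the reduction it suffices to show $\Phi$ is not injective; were it injective, its image would be a subgroup of $G$ isomorphic to $C_n^{r+1}$, so it is enough to prove that $G$ contains no copy of $C_n^{r+1}$. This is the heart of the argument: since $n_t<n$ and $n_t\mid n$, there is a prime $p$ with $v_p(n_t)<v_p(n)=:k$, where $v_p$ denotes the $p$-adic valuation. For a finite abelian group $L$, the set $(p^{k-1}L)[p]=\{y\in p^{k-1}L:py=0\}$ is an $\mathbb{F}_p$-vector space whose dimension equals the number of invariant factors of $L$ divisible by $p^k$, and this dimension is monotone under subgroups, since $K\le L$ gives $(p^{k-1}K)[p]\le(p^{k-1}L)[p]$. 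Every invariant factor of $H$ divides $n_t$ and so is not divisible by $p^k$; hence this dimension equals $r$ for $G=H\times C_n^r$ but equals $r+1$ for $C_n^{r+1}$. Therefore $C_n^{r+1}$ cannot embed in $G$, $\Phi$ is not injective, and the resulting nontrivial relation provides the desired $A$-weighted zero-sum subsequence.

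The hard part will be the embedding claim $C_n^{r+1}\not\hookrightarrow G$; the reduction and the explicit extremal sequence are routine. The delicate point is to isolate the single prime $p$ at which the valuation of $n_t$ drops below that of $n$, and to use the subgroup-monotone invariant $\dim_{\mathbb{F}_p}(p^{k-1}(\cdot))[p]$ rather than the plain $p$-rank $\dim_{\mathbb{F}_p}(\cdot)[p]$, which would overcount the invariant factors contributed by $H$ and fail to give the separation $r<r+1$.
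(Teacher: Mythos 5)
The paper offers no proof of this lemma to compare against: it is quoted verbatim from Marchan--Ordaz--Schmid \cite[Theorem 5.2]{marc}, so your argument can only be judged on its own merits. On those merits it is correct and complete. The reduction is valid because $\exp(G)=n$ makes $G$ a $\Z_n$-module and $A=[1,n-1]$ is exactly the set of nonzero residues, so nonempty $A$-weighted zero-sum subsequences correspond precisely to nontrivial tuples in the kernel of $\Phi(c)=\sum_i c_i g_i$. The extremal sequence $(0,e_1)\bd\dots\bd(0,e_r)$ settles the lower bound, and the upper bound correctly reduces to the non-embedding $C_n^{r+1}\not\hookrightarrow G$. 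Your obstruction for that non-embedding is sound: $\dim_{\mathbb{F}_p}\bigl(p^{k-1}L\bigr)[p]$ counts the cyclic factors of $L$ whose order is divisible by $p^k$, it is monotone under passing to subgroups, and choosing $p$ with $v_p(n_t)<v_p(n)=k$ --- possible precisely because $n_t\mid n$ and $n_t<n$ --- yields the contradiction $r+1\le r$. This is exactly where the hypothesis $n_t<n$ must enter, and indeed the statement fails without it: if $n_t=n$ then $G\cong H'\times C_n^{r+1}$ and the constant is at least $r+2$. Your closing observation is also apt; the plain $p$-rank $\dim_{\mathbb{F}_p}L[p]$ would count the factors of $H$ divisible by $p$ and need not separate $G$ from $C_n^{r+1}$. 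One remark: the non-embedding also follows from the standard fact that if $K\le L$ then the $i$-th largest invariant factor of $K$ divides that of $L$, so your valuation argument can be viewed as a self-contained proof of the one instance of that fact you need.
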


Finally we obtain the following result for the full weight.

\begin{theorem}\label{c2}
If $A=[1,n-1],$ then ${\sf C}_A(C_n^2)=4.$
\end{theorem}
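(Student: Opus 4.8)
The plan is to prove the two inequalities $\mathsf{C}_A(C_n^2) \ge 4$ and $\mathsf{C}_A(C_n^2) \le 4$ separately, where $A = [1,n-1]$ is the full weight. The lower bound is immediate: by Proposition~\ref{lb}(a) with $r=2$ we already have $\mathsf{C}_A(C_n^2) \ge 2^2 = 4$, so no further work is needed there. Hence the entire content of the theorem lies in establishing the upper bound $\mathsf{C}_A(C_n^2) \le 4$, i.e. showing that every ordered sequence $S = g_1 \bd g_2 \bd g_3 \bd g_4 \in \F^*(C_n^2)$ of length $4$ admits a nonempty $A$-weighted consecutive product-one subsequence, meaning some consecutive block $g_i^{a_i} \bd \dots \bd g_j^{a_j}$ with all $a_t \in [1,n-1]$ equals the identity.

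First I would translate the problem into additive language, writing $C_n^2$ additively so that an $A$-weighted consecutive subproduct becomes a sum $\sum_{t=i}^{j} a_t g_t$ with each coefficient $a_t \in [1,n-1]$, and the goal becomes finding such a consecutive block summing to $0$. The natural tool is Lemma~\ref{davenport}: the full-weight weighted Davenport constant $\mathsf{D}_A$ of a group $H \times C_n^r$ (with the stated divisibility and the strict inequality $n_t < n$) equals $r+1$. I would try to exploit this by examining the four partial ``windows'' of $S$ and relating a weighted consecutive zero-sum to a weighted (not necessarily consecutive) zero-sum over a suitable subgroup or quotient, so that the $\mathsf{D}_A$-bound of $3$ forces a short zero-sum that can be realized consecutively. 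Concretely, I expect to look at the $g_i$ coordinatewise and argue that among four elements of $C_n^2$, the full-weight freedom in choosing each coefficient in $[1,n-1]$ is enough to zero out a consecutive stretch; the key combinatorial fact is that with full weight a single element $g$ contributes $0$ iff $g$ has order dividing some element of $[1,n-1]$ applied to it, and pairs/triples of consecutive elements give a lot of reachable sums.

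The main obstacle will be handling the elements that are individually ``un-cancellable,'' namely those $g_i$ for which no coefficient $a \in [1,n-1]$ gives $a g_i = 0$ — precisely the elements of order exactly $n$ (where only $a=n$ would work, but $n \notin A$). For such elements a length-one window cannot be product-one, so I must produce a zero-sum from a consecutive window of length at least two, and here the interaction between the two cyclic coordinates matters. I anticipate a short case analysis on how many of $g_1,g_2,g_3,g_4$ have order $n$ versus order properly dividing $n$: whenever some $g_i$ has order $d < n$ the single-element window $g_i^{d}$ (or more precisely $g_i^{a}$ for a suitable $a \in [1,n-1]$ that is a multiple of $d$) already works, so the hard case is when all four have order $n$. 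In that remaining case I would apply the weighted Davenport bound to the length-four consecutive windows, using that $\mathsf{D}_A(C_n^2) = 3$ (taking $H$ trivial, $r=2$ in Lemma~\ref{davenport}) guarantees a weighted zero-sum among any three of the $g_i$, and then argue that with only four elements arranged in a line one can always select the zero-sum to occupy \emph{consecutive} positions — this alignment between an unordered weighted zero-sum and a consecutive block is the delicate point and will likely require checking the few ways three indices sit inside $\{1,2,3,4\}$.
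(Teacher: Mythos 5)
Your strategy matches the paper's — lower bound from Proposition \ref{lb}(a), upper bound from $\mathsf{D}_A(C_n^2)=3$ via Lemma \ref{davenport} — but the proposal stops exactly where the real work is, and the step you defer is, as you phrase it, not even true. You propose to ``argue that with only four elements arranged in a line one can always select the zero-sum to occupy consecutive positions,'' i.e.\ that some weighted zero-sum guaranteed by $\mathsf{D}_A(C_n^2)=3$ among three of the $g_i$ can be taken to sit in consecutive positions. Writing $C_n^2$ additively with basis $e_1,e_2$, take $g_1=e_1$, $g_2=e_2$, $g_3=e_1$, $g_4=e_2$. Every consecutive window of length at most $3$ has some coordinate carried by exactly one entry, with a single coefficient $a\in[1,n-1]$, so it cannot be an $A$-weighted zero-sum; the only weighted zero-sums supported on three of the indices are the crossing pairs $\{1,3\}$ and $\{2,4\}$. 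Hence no amount of checking ``the few ways three indices sit inside $\{1,2,3,4\}$'' can close the argument: in this example the only consecutive $A$-weighted zero-sum block is the whole sequence of length four, which is not a subsequence of any triple.

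The missing idea is a combination trick, and it is how the paper finishes. Apply $\mathsf{D}_A(C_n^2)=3$ to the two consecutive triples $T_1=g_1\bd g_2\bd g_3$ and $T_2=g_2\bd g_3\bd g_4$. If neither yields a weighted product-one subsequence in consecutive positions, then the only remaining possibility for $T_1$ is the pair $g_1\bd g_3$ and for $T_2$ the pair $g_2\bd g_4$; say $g_1^{a_1}g_3^{a_3}=1$ and $g_2^{a_2}g_4^{a_4}=1$ with $a_i\in A$. Multiplying the two relations and using commutativity gives $g_1^{a_1}g_2^{a_2}g_3^{a_3}g_4^{a_4}=1$, so the full window $S$ itself is an $A$-weighted \emph{consecutive} product-one sequence (in the example above: $e_1+e_2+(n-1)e_1+(n-1)e_2=0$). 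With this in hand, your preliminary reduction to the case where every $g_i$ has order $n$ is harmless but unnecessary. In short: right tools, correct identification of where the difficulty sits, but the decisive step is absent, and the route you sketch toward it would fail.
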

\proof
According to Proposition \ref{lb}(a), we have ${\sf C}_A(C_n^2)\ge4.$ Let $S=g_1 \bd g_2 \bd g_3 \bd g_4$ over $C_n^2.$ Since ${\sf D}_A(C_n^r)=r+1,$ it follows that $T = g_i \bd g_j \bd g_l$ has at least one $A$-weighted product-one subsequence, where $i,j,l\in\{1,2,3,4\}$ are distinct. If such a subsequence is formed by consecutive elements, then we are done. Otherwise, we may suppose that this subsequence is not formed by consecutive elements. We claim that $S$ is an $A$-weighted product-one subsequence. In fact, $T_1 = g_1 \bd g_2 \bd g_3$ and $T_2 = g_2 \bd g_3 \bd g_4$ have each at least one $A$-weighted produt-one subsequence without consecutive elements. The only possibilities are $g_1 \bd g_3$ for $T_1$, say $g_1^{a_1}g_3^{a_3} = 1$ for some $a_1, a_3 \in A$, and $g_2 \bd g_4$ for $T_2$, say $g_2^{a_2}g_4^{a_4} = 1$ for some $a_2, a_4 \in A$. Hence $S$ is an $A$-weighted product-one sequence, since $g_1^{a_1}g_2^{a_2}g_3^{a_3}g_4^{a_4}=1$.
\qed

For $A=\{1,2,\ldots,d^kn-1\}\setminus\{d^{k-i}n:i\in[1,k]\}$, we need the following auxiliar result.

\begin{lemma}\cite[Theorem 3.2]{losm2}\label{d2}
 Let $G=H\oplus C_{d^kn}^r$, where $\exp(H)\mid d^k,$ $\gcd(d,n)\leq d-1$ and $d^kn\geq 6,$ where $k$ is a positive integer. Then ${\sf D}_A(G)=r+1,$ for $A=\{1,2,\ldots,d^kn-1\}\setminus\{d^{k-i}n:i\in[1,k]\}$.
\end{lemma}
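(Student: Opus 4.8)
The plan is to establish the two matching inequalities ${\sf D}_A(G)\ge r+1$ and ${\sf D}_A(G)\le r+1$, writing throughout $m=d^kn=\exp(G)$ and $F=\{d^{k-i}n:i\in[1,k]\}=\{n,dn,\dots,d^{k-1}n\}$, so that $A=[1,m-1]\setminus F$ and $|F|=k$. The lower bound is immediate and uses only $A\subseteq[1,m-1]$: I would take $S=e_1\bd\dots\bd e_r$, where $e_j$ generates the $j$-th cyclic factor of $C_m^r$ (with trivial $H$-coordinate). A nonempty $A$-weighted subsum $\sum_{j\in I}a_je_j$ with $a_j\in A$ vanishes only if every $a_j\equiv0\pmod m$, which is impossible since $1\le a_j\le m-1$; hence $S$ is $A$-weighted zero-sum free of length $r$ and ${\sf D}_A(G)\ge r+1$.

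For the upper bound I would lean on the already available full-weight case. Since $\exp(H)\mid d^k$, the hypothesis $\exp(H)<\exp(G)=m$ of Lemma \ref{davenport} is satisfied, and it gives ${\sf D}_{[1,m-1]}(G)=r+1$. Consequently every sequence $g_0\bd\dots\bd g_r$ of length $r+1$ already carries a full-weighted zero-sum subsequence $\sum_{i\in I}a_ig_i=0$ with $\varnothing\neq I$ and $a_i\in[1,m-1]$. The whole problem thus reduces to showing that discarding the $k$ weights in $F$ does no harm, i.e. that a zero-sum can be realized with all weights in $A$. The basic mechanism is weight replacement: if $a_i\in F$ I would look for $a_i'\in A$ with $a_i'\equiv a_i\pmod{\ord(g_i)}$, which leaves $a_i'g_i=a_ig_i$ and hence the whole equation intact. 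The residue class of $a_i$ modulo $\ord(g_i)$ has $m/\ord(g_i)$ representatives in $[0,m-1]$, against only the $k$ exclusions in $F$ (and $0$), so this replacement succeeds whenever $\ord(g_i)$ is small enough.

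The hard part will be precisely the weights carried by elements of large order, since replacement degenerates: when $\ord(g_i)=m$ the class is the singleton $\{a_i\}$ and no substitution is available. I therefore do not expect a purely post-hoc fix to work, and would instead build the avoidance of $F$ into the construction, by induction on $r$: split $G=G'\oplus C_m$ with $G'=H\oplus C_m^{r-1}$, use the inductive hypothesis ${\sf D}_A(G')=r$, and cancel the extra $C_m$-coordinate with a single additional element, in the spirit of the classical inductive bound ${\sf D}(G_1\oplus C_m)\le{\sf D}(G_1)+{\sf D}(C_m)-1$ but with the ``$+1$'' coming from the base fact ${\sf D}_A(C_m)=2$. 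This is where the three hypotheses do their work. Verifying ${\sf D}_A(H)=1$ amounts to showing that every $h\in H$ has $\ord(h)\mid a$ for some $a\in A$; since $\ord(h)\mid\exp(H)\mid d^k$ and $d\nmid n$ (equivalently $\gcd(d,n)\le d-1$), the forbidden values $d^jn$ are too sparse among the multiples of any divisor of $d^k$ to block them all, while $d^kn\ge6$ guarantees $|A|=m-1-k\ge1$ and, more importantly, supplies enough admissible weights in the genuinely two-element step ${\sf D}_A(C_m)=2$. The single point on which I would concentrate the most effort is the order-$m$ case of this two-element step: showing that any two elements of $C_m$ admit a weighted relation with both weights outside $F$, which is the combinatorial heart from which the general rank-reduction, and hence the theorem, follows.
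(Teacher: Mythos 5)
The paper itself gives no proof of Lemma \ref{d2}: it is quoted from \cite[Theorem 3.2]{losm2}. So your attempt has to stand on its own, and as written it is not a proof. Your lower bound (the standard basis of $C_{d^kn}^r$ is $A$-weighted zero-sum free because every weight in $A$ is nonzero modulo $d^kn$) is complete and correct. The upper bound, however, is a plan whose decisive steps are missing. You rightly discard the weight-replacement strategy, since it breaks down exactly on elements of order $m=d^kn$; but the fallback you propose --- induction on $r$ with base case ${\sf D}_A(C_m)=2$ and an inductive step ``in the spirit of ${\sf D}(G_1\oplus C_m)\le{\sf D}(G_1)+{\sf D}(C_m)-1$'' --- is nowhere carried out. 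The base case is explicitly deferred (you call it the combinatorial heart), and the inductive step is invoked as if it were an available principle, when in fact no such subadditivity holds in general even for the unweighted Davenport constant, and no weighted analogue is established; you give no mechanism by which an $A$-weighted zero-sum over $G=G'\oplus C_m$ is assembled from the inductive hypothesis on $G'=H\oplus C_m^{r-1}$ plus one extra element. So the argument has a genuine gap precisely at the point you yourself identify as the hard part.

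The idea you are missing makes the upper bound a few lines, and it runs opposite to your reduction: instead of trying to dodge the forbidden set $F=\{d^{k-j}n:j\in[1,k]\}$ inside the modulus $m$, rescale by $d^k$ so that $F$ is avoided automatically. Given $g_0,\dots,g_r\in G$, the elements $d^kg_0,\dots,d^kg_r$ lie in $d^kG\cong C_n^r$ because $\exp(H)\mid d^k$; since the homomorphism $(\Z/n\Z)^{r+1}\to d^kG$ sending $(b_0,\dots,b_r)$ to $\sum_i b_i(d^kg_i)$ cannot be injective for cardinality reasons, there exist $b_i\in[0,n-1]$, not all zero, with $\sum_i (b_id^k)g_i=0$. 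For each $i$ in the nonempty support, the weight $b_id^k$ lies in $[1,m-1]$, and it cannot equal $d^{k-j}n$ for any $j\in[1,k]$, because $b_id^k=d^{k-j}n$ forces $d\mid n$, contradicting $\gcd(d,n)\le d-1$. Hence all weights lie in $A$ and ${\sf D}_A(G)\le r+1$. Note that this argument --- like your claim ${\sf D}_A(H)=1$ and your appeal to Lemma \ref{davenport} --- needs $n\ge 2$: for $n=1$ the statement as transcribed is actually false (take $d=7$, $k=1$, $H=C_7$, so $G=C_7^{r+1}$ and $A=\{2,\dots,6\}$; the standard basis of length $r+1$ is $A$-weighted zero-sum free, so ${\sf D}_A(G)\ge r+2$), so $n\ge2$ is an implicit hypothesis that any complete write-up must surface.
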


Lastly, we obtain the following.

\begin{theorem}
Let $G_r=H\oplus C_{d^kn}^r,$ with $r\in\{1,2\}$, where $\exp(H)\mid d^k,$ $\gcd(d,n)\leq d-1$ and $d^kn\geq 6,$ where $k$ is a positive integer. For $A=\{1,2,\ldots,d^kn-1\}\setminus\{d^{k-i}n:i\in[1,k]\},$ we have
\begin{enumerate}
\item[(i)] ${\sf C}_A(G_1)=2;$
\item[(ii)] ${\sf C}_A(G_2)=4.$
\end{enumerate}
\end{theorem}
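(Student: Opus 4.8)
The plan is to handle both items with the same two–sided strategy employed in Theorem~\ref{c2}, replacing $C_n^r$ by $G_r = H \oplus C_{d^kn}^r$ and the full weight by the present set $A$. The decisive inputs are that Lemma~\ref{d2} supplies ${\sf D}_A(G_r) = r+1$ and that $G_r$ is abelian. First I would record the exponent bookkeeping needed to invoke Proposition~\ref{induction}: since $\exp(H) \mid d^k$ and $d^k \mid d^kn$, the exponent of $G_1 = H \oplus C_{d^kn}$ equals $d^kn$, which coincides with $\exp(C_{d^kn})$, and $A \subseteq [1, d^kn - 1]$. This places the splitting $G_2 = G_1 \oplus C_{d^kn}$ inside the hypotheses of Proposition~\ref{induction}.

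For item (i), I would prove ${\sf C}_A(G_1) = 2$ directly. The lower bound ${\sf C}_A(G_1) \ge 2$ holds because a consecutive product-one subsequence is in particular a product-one subsequence, so ${\sf C}_A(G_1) \ge {\sf D}_A(G_1)$, and Lemma~\ref{d2} with $r = 1$ gives ${\sf D}_A(G_1) = 2$. For the upper bound I observe that in a sequence $S = g_1 \bd g_2$ of length two every nonempty subsequence is automatically consecutive, since the index sets $\{1\}$, $\{2\}$, $\{1,2\}$ are all intervals; hence the $A$-weighted product-one subsequence guaranteed by ${\sf D}_A(G_1) = 2$ is already consecutive, yielding ${\sf C}_A(G_1) \le 2$.

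For item (ii), the lower bound ${\sf C}_A(G_2) \ge 4$ follows from Proposition~\ref{induction} applied to $G_2 = G_1 \oplus C_{d^kn}$ together with item (i): taking $H$ trivial in item (i) gives ${\sf C}_A(C_{d^kn}) = 2$, so ${\sf C}_A(G_2) \ge {\sf C}_A(G_1)\,{\sf C}_A(C_{d^kn}) = 4$. The upper bound is where the real work lies and reproduces the argument of Theorem~\ref{c2}. Given $S = g_1 \bd g_2 \bd g_3 \bd g_4$, Lemma~\ref{d2} with $r = 2$ gives ${\sf D}_A(G_2) = 3$, so each of the triples $T_1 = g_1 \bd g_2 \bd g_3$ and $T_2 = g_2 \bd g_3 \bd g_4$ carries an $A$-weighted product-one subsequence. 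If either such subsequence is consecutive we are finished, so I would reduce to the case where both are non-consecutive; within three consecutive terms the only non-consecutive subsequence is the one skipping the middle term, forcing $g_1^{a_1} g_3^{a_3} = 1$ and $g_2^{a_2} g_4^{a_4} = 1$ for suitable weights in $A$.

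The final step, and the point where abelianness is essential, is to assemble these relations into a product-one evaluation of the whole string: since $G_2$ is abelian, $g_1^{a_1} g_2^{a_2} g_3^{a_3} g_4^{a_4} = (g_1^{a_1} g_3^{a_3})(g_2^{a_2} g_4^{a_4}) = 1$, so $S$ itself is an $A$-weighted consecutive product-one sequence and ${\sf C}_A(G_2) \le 4$. The main obstacle is not any single computation but this case analysis in the upper bound of (ii): one must be certain that the only way both triples fail to produce a consecutive product-one subsequence is the pair $g_1 \bd g_3$, $g_2 \bd g_4$, and that commutativity then closes the gap. Everything else reduces to the exponent bookkeeping and to the two applications of the already-established Lemma~\ref{d2} and Proposition~\ref{induction}.
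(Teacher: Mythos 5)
Your proof is correct. For item (ii) it is essentially the paper's own argument: the paper simply declares that proof ``similar to Theorem~\ref{c2}'', and your chain --- Lemma~\ref{d2} with $r=2$ giving ${\sf D}_A(G_2)=3$, the two overlapping triples $g_1\bd g_2\bd g_3$ and $g_2\bd g_3\bd g_4$, the observation that the only non-consecutive nonempty subsequence of a triple is the one skipping the middle term, and commutativity gluing $g_1^{a_1}g_3^{a_3}=1$ and $g_2^{a_2}g_4^{a_4}=1$ into a product-one evaluation of the whole string --- together with the lower bound via Proposition~\ref{induction}, is exactly what that reference unpacks to. Where you genuinely diverge is item (i). The paper proves the upper bound by a direct manipulation: assuming no single $g_i$ satisfies $g_i^{a}=1$ for some $a\in A$, it writes $g_1=y^{b_1}$, $g_2=y^{b_2}$ for a common $y$ and weights $b_1,b_2\in A$, and concludes $g_1^{b_2}g_2^{-b_1}=1$ ``since $-b_1\in A$''. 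You instead invoke Lemma~\ref{d2} with $r=1$ to get ${\sf D}_A(G_1)=2$ and note that every nonempty subsequence of a length-two sequence is automatically consecutive, so ${\sf C}_A(G_1)\le {\sf D}_A(G_1)=2$, with the matching lower bound from the general inequality ${\sf C}_A\ge {\sf D}_A$. Your route buys rigor: the paper's direct argument tacitly assumes both that two elements admitting no one-term weighted relation must be powers of a common element (which can fail when $H$ is nontrivial, e.g.\ $g_1=(1,x)$ and $g_2=(z,x)$ in $C_{d^k}\oplus C_{d^kn}$ with $x$, $z$ generators) and that $A$ is closed under negation modulo $d^kn$ (also false in general: for $d=3$, $k=2$, $n=1$ one has $6\in A$ but $-6\equiv 3 \pmod 9$ and $3\notin A$). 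Your appeal to the cited computation of ${\sf D}_A$ sidesteps both issues, at the modest cost of using Lemma~\ref{d2} for $r=1$ where the paper attempts a self-contained two-line argument.
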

\proof
For item $(i),$ clearly ${\sf C}_A(G_1) \ge 2.$ Consider $S=g_1 \bd g_2\in \mathcal{F}(G_1).$ If either $g_1$ or $g_2$ is an $A$-weighted product-one subsequence, then we are done. If does not exist $a_1, a_2\in A$ such that $g_1^{a_1}=1$ and $g_2^{a_2}=1,$ then $g_1 = y^{b_1}$ and $g_2 = y^{b_2}$ for some $b_1, b_2 \in A$ and $y \in G_1.$ Since $-b_1 \in A,$ it follows that $g_1^{b_2}g_2^{-b_1}=1$ in $G_1.$ The proof of item $(ii)$ is similar to proof of Theorem \ref{c2}.

\qed

\end{document}